\documentclass{article}
\usepackage{amsmath,amssymb,amsfonts,amsthm,enumerate,xcolor,enumerate,stmaryrd,mathtools}
\usepackage[all]{xy}
\newcommand{\A}{\mathbb{A}}
\newcommand{\B}{\mathbb{B}}

\newcommand{\C}{\mathbb{C}}
\newcommand{\X}{\mathbb{X}}

\NewCommandCopy{\OldS}{\S}
\renewcommand{\S}{\mathbb{S}}
\newcommand{\two}{\textbf{2}}
\newcommand{\op}{\text{op}}
\newcommand{\bS}{\mathbf{S}}

\newcommand{\SE}[1]{\mathbf{SplExt}(#1)}
\newcommand{\FSE}[2]{\mathbf{SplExt}_{#1}(#2)}
\newcommand{\TFSE}[2]{\overline{\mathbf{SplExt}_{#1}(#2)}}

\newcommand{\spexc}[1]{\llbracket#1\rrbracket}
\newcommand{\SplExt}{\textnormal{SplExt}}
\newcommand{\Aut}{\text{Aut}}
\newcommand{\Der}{\text{Der}}
\newcommand{\End}{\text{End}}

\newcommand{\Set}{\mathbf{Set}}

\newtheorem{theorem}{Theorem}[section]
\newtheorem{proposition}[theorem]{Proposition}
\newtheorem{lemma}[theorem]{Lemma}
\newtheorem{corollary}[theorem]{Corollary}
\newtheorem{remark}[theorem]{Remark}

\begin{document}
\title{More on action representability and normalizers}
\author{J. R. A. Gray}
\maketitle
\begin{abstract}
We show that under mild assumptions a functor is a prefibration
if and only if it has terminal objects in its fibers and admits
precartesian liftings of monomorphisms. We use this to show that 
the following conditions on a pointed category $\C$
are equivalent:
\begin{enumerate}[(a)]
\item The category of morphisms of $\C$ admits generic split extensions;
\item Each functor category of $\C$ with finite domain category admits generic split extensions;
\item $\C$ admits normalizers and generic split extensions;
\item The kernel functor from the category split extensions in $\C$ to $\C$
is a prefibration.
\end{enumerate}
In addition we show that the category of morphisms of a pointed protomodular $\C$ admits generic
split extensions if and only if for each morphism $f:X\to Z$ the functor sending
each object $B$ to the isomorphism class of split extensions in $\C^{\two}$
of $(B,B,1_B)$ with kernel $(X,Z,f)$, is representable.
\end{abstract}
\section{Introduction}
This paper is the third in a series of papers by the current author exploring
the notion of action representability, introduced and studied in \cite{BORCEUX_JANELIDZE_KELLY:2005a}.
In the first paper of the series \cite{GRAY:2013b}, one of the main results
was to show that for a semi-abelian category $\C$, the category
of morphisms in $\C$ is action representable if and only if $\C$ is action representable and
admits normalizers. In the second paper \cite{GRAY:2015a} it was shown that these conditions (in the semi-abelian
context) are equivalent to the functor assigning to each split extension in $\C$ its kernel being
a prefibration. The proof
from the first paper was also generalized and clarified, in the second paper, by showing that
these aforementioned equivalent conditions are in fact special cases of equivalent
conditions about an abstract functor together with a monad on its domain. In this paper
we follow the approach of the second paper, generalizing and simplifying certain results and adding others.

For a pointed category $\C$, a split extension (of $B$ with kernel $X$) is a diagram in $\C$
\begin{equation}
\label{equation: a split extension}
\xymatrix{X \ar[r]^{\kappa} & A \ar@<0.5ex>[r]^{\alpha} & B\ar@<0.5ex>[l]^{\beta}}
\end{equation}
where $\kappa$ is the kernel of $\alpha$, and $\alpha\beta = 1_B$.  A morphism of split extensions is a diagram in $\C$

\begin{equation}
\label{equation: morphism of split ext}
\vcenter{
\xymatrix{
X \ar[r]^{\kappa} \ar[d]_{u}& A \ar@<0.5ex>[r]^{\alpha}\ar[d]^{v} & B\ar@<0.5ex>[l]^{\beta}\ar[d]^{w}\\
X' \ar[r]^{\kappa'} & A' \ar@<0.5ex>[r]^{\alpha'} & B'\ar@<0.5ex>[l]^{\beta'}
}
}
\end{equation}
where the top and bottom rows are split extensions (the domain and codomain respectively), and $v\kappa = \kappa' u$, $v\beta = \beta' w$ and $w\alpha=\alpha'v$. The split short five lemma holds in a
pointed category $\C$ if for each diagram \eqref{equation: morphism of split ext} in $\C$, the
morphism $v$ is an isomorphism as soon as both $u$ and $w$ are.
A pointed finitely complete category is (Bourn-)protomodular \cite{BOURN:1991}
if it satisfies the split short five lemma and is semi-abelian \cite{JANELIDZE_MARKI_THOLEN:2002} if in addition it is exact and admits binary coproducts. 
Writing $\SE{\C}$ for the category of split extensions we can construct the span 
\begin{equation}
\label{split ext span}
\xymatrix{
\C & \SE{\C} \ar[l]_-{P} \ar[r]^-{K} & \C
}
\end{equation}
where $P$ and $K$ are the functors sending the split extension \eqref{equation: a split extension} to $B$ and $X$, respectively. A generic split extension with kernel $X$ (in the sense
of \cite{BORCEUX_JANELIDZE_KELLY:2005a}) is a terminal object in the category $K^{-1}(X)$. We denote such a split extension as follows
\begin{equation}
\label{equation: generic split extension}
	\xymatrix{X \ar[r]^-{k_X} & H(X) \ar@<0.5ex>[r]^-{p_X} & \spexc{X}. \ar@<0.5ex>[l]^-{i_X}}
\end{equation}
If $\C$ is pointed protomodular and finitely complete, then each object $\spexc{X}$ is called a split extension classifier
and is the representing object of a functor 
\begin{equation}
\label{equation: splext functor}
\SplExt(-,X) : \C^{\op} \to \Set
\end{equation}
which
sends an object $B$ to the isomorphism class of split extensions of the form \eqref{equation: a split extension}, and which is defined on morphisms by \emph{pullback}. When $\C$ is semi-abelian
there is an equivalence of categories between the category of $\SE{\C}$ and a category of
\emph{internal object actions} (see \cite{BORCEUX_JANELIDZE_KELLY:2005a}). The functors
\eqref{equation: splext functor} turn out to be isomorphic to ones defined in terms of internal
object actions, and the category $\C$ is called action representable when each of these
functors is representable.
We will also write $\FSE{X}{\C}$
for the category $K^{-1}(X)$. We write $\C^{\two}$ for the category of morphisms in $\C$
and denote its objects as triples $(X,Z,f)$ where $X$ and $Z$ are objects and $f:X\to Z$ is
a morphism in $\C$.

Action representable categories include:
\begin{itemize}
\item the category of groups where $\spexc{X}=\Aut(X)$ is the
	group of automorphisms of $X$;
\item the category of Lie algebras where $\spexc{X}=\Der(X)$ is
	the Lie algebra of derivations of $X$;
\item the category of not necessarily unitary Boolean rings where $\spexc{X}=\End(X)$ is
	the Boolean ring of linear endomorphisms of the underlying abelian
	groups of $X$;
\end{itemize}
The main aim of the paper is threefold: 
(i) to show Theorem 4.8 of \cite{GRAY:2013b} generalizes to the pointed protomodular context with a simpler proof;
(ii) to clarify the role of the objects $\spexc{X,Z,f}$ for each morphism $f:X\to Z$ which arise as part of the $\spexc{(X,Z,f)}$
in the category of morphisms of $\C^\two$; 
(iii) to produce a simple construction of $\spexc{X,Z,f}$ in terms of normalizers.

For $f:Z\to Z$ in the category of groups $\spexc{X,Z,f}$ is the group of pairs of automorphisms of $X$ and $Z$,
respectively, compatible with $f$, that is the subgroup 
\[
	\{(\theta, \phi)\in \Aut(X)\times \Aut(Z)\mid f \circ \theta = \phi \circ f\}
\]
of $\Aut(X)\times \Aut(Z)$, which as a special case of Proposition \ref{2d is normalizer pullback} can
be constructed as
\[
(i_X\times i_Z)^{-1}(N_{H(X)\times H(Z)}(\langle k_X, k_zf \rangle(X)))
\]

\section{Spans, pre-fibrations and terminal objects in fibers}
For a functor $F:\A\to \X$ and an object $B$ in $\X$ we write $F_B$
for the functor $(\A\downarrow B) \to (\X\downarrow F(B))$ which
sends an object $(A,\alpha)$ to $(F(A),F(\alpha))$ and sends
a morphism $f:(A,\alpha)\to (A',\alpha')$ to $F(f)$.
\begin{lemma}\label{product_lemma}
Let $\A$ be a category with binary products, $F:\A\to \X$ a functor which preserves binary
products, $B$ an object in $\A$ and $X$ be an object in $\X$.
If $T$ is a terminal object in $F^{-1}(X)$, then for each $\theta:X\to F(B)$ in $\X$
the functor
$F_{T\times B}^{-1}(X,\langle 1,\theta\rangle) \to F_B^{-1}(X,\theta)$
defined on objects by $(A,\gamma)$ to $(A,\pi_2\gamma)$ and as identity on morphisms, is an isomorphism.
\end{lemma}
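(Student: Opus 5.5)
The plan is to write down an explicit inverse functor and check that the two composites are identities. Throughout, identify $F(T\times B)$ with $X\times F(B)$ through the canonical comparison isomorphism, which exists because $F$ preserves binary products and $F(T)=X$. Under this identification $F(\pi_1)$ and $F(\pi_2)$ are the product projections, and $F(\langle a,b\rangle)=\langle F(a),F(b)\rangle$.

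Recall the objects and morphisms involved.
\begin{itemize}
\item An object of $F_{T\times B}^{-1}(X,\langle 1,\theta\rangle)$ is a pair $(A,\gamma)$ with $\gamma:A\to T\times B$, $F(A)=X$ and $F(\gamma)=\langle 1_X,\theta\rangle$.
\item An object of $F_B^{-1}(X,\theta)$ is a pair $(A,\delta)$ with $\delta:A\to B$, $F(A)=X$ and $F(\delta)=\theta$.
\item In both categories a morphism $f:(A,\gamma)\to(A',\gamma')$ is a map with $\gamma' f=\gamma$ and $F(f)=1_X$.
\end{itemize}

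First I check that the given assignment is a functor. We have $F(\pi_2\gamma)=\pi_2\langle 1,\theta\rangle=\theta$. Moreover, if $\gamma'f=\gamma$ then $\pi_2\gamma' f=\pi_2\gamma$, and $F(f)=1_X$ is unchanged. So objects and morphisms go to objects and morphisms, and functoriality is trivial since the functor is the identity on morphisms.

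Next I build the inverse using terminality of $T$ in $F^{-1}(X)$. For each $A$ with $F(A)=X$, let $t_A:A\to T$ be the unique morphism with $F(t_A)=1_X$. Send $(A,\delta)$ to $(A,\langle t_A,\delta\rangle)$. This is an object of the first category because $F\langle t_A,\delta\rangle=\langle 1_X,\theta\rangle$. On morphisms the inverse is again the identity, so the key point is to check that a morphism $f:(A,\delta)\to(A',\delta')$ satisfies $\langle t_{A'},\delta'\rangle f=\langle t_A,\delta\rangle$. The second components agree by hypothesis. For the first, $t_{A'}f$ is a morphism $A\to T$ lying over $1_X$, since $F(t_{A'}f)=1_X\cdot 1_X$. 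Uniqueness in the fibre therefore forces $t_{A'}f=t_A$.

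Finally I verify that the two composites are identities. One composite sends $(A,\delta)$ to $(A,\pi_2\langle t_A,\delta\rangle)=(A,\delta)$. The other sends $(A,\gamma)$ to $(A,\langle t_A,\pi_2\gamma\rangle)$, so it remains to show $\pi_1\gamma=t_A$. This holds because $F(\pi_1\gamma)=\pi_1\langle 1_X,\theta\rangle=1_X$, and terminality again gives uniqueness. On morphisms both composites are identities.

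The only real subtlety, and the step most likely to need care, is this bookkeeping with the comparison isomorphism $F(T\times B)\cong X\times F(B)$. One must make sure that "$F(\gamma)=\langle 1,\theta\rangle$" is read modulo it, and that $F(\pi_i)$ corresponds to the projections. Once this is fixed, every verification reduces to the uniqueness of maps into $T$ lying over $1_X$.
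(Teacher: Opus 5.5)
Your proof is correct and follows the paper's argument. The paper uses exactly the same inverse, $(A,\alpha)\mapsto(A,\langle u,\alpha\rangle)$ with $u$ the unique map to $T$ in $F^{-1}(X)$, and leaves as easy checks the verifications you carry out explicitly: $t_{A'}f=t_A$ and $\pi_1\gamma=t_A$, both by terminality of $T$ in the fibre.
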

\begin{proof}
Let $\theta: X\to Y$ be an object in $\X$.
For an object $(A,\alpha)$
in $F_B^{-1}(X,\theta)$ let $u:A\to T$ be the unique morphism in $F^{-1}(X)$.
It is easy to check that the pair $(A,\langle u,\alpha\rangle)$ 
is in
$F_{T\times B}^{-1}(X,\langle 1,\theta\rangle)$, and this assignment
produces the desired inverse functor.
\end{proof}
Recall that a functor weakly creates limits when each limiting cone of
the image of a diagram under the functor is the image of a limiting cone
under the functor.  
\begin{proposition}\label{prefib}
Let $F:\A\to \X$ be a functor which weakly creates limits, and suppose that $\X$ has finite products.
The conditions are equivalent:
\begin{enumerate}[(a)]
\item $F$ is prefibration;
\item $F$ admits $F$-precartesian liftings of monomorphisms and has terminal objects in its fibers;
\item $F^\two$ has terminal objects in its fibers.
\end{enumerate}
\end{proposition}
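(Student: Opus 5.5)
The plan is to prove (a)$\Rightarrow$(b)$\Rightarrow$(a) and (a)$\Rightarrow$(c)$\Rightarrow$(a). Throughout I use weak creation of limits in one way: whenever a finite limit of the image of a diagram exists in $\X$, it lifts to a limit cone in $\A$ lying over it. In particular, $\A$ has a terminal object $1_\A$ with $F(1_\A)$ terminal in $\X$. Also, $F$ maps every product cone in $\A$ to a product cone, since any two are isomorphic and one of them is mapped to a limit cone. Recall that $F$ is a prefibration exactly when each category $F_B^{-1}(X,\theta)$ has a terminal object, i.e.\ each $\theta:X\to F(B)$ has a precartesian lifting.

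\textbf{(a)$\Rightarrow$(b).} Liftings of monomorphisms are a special case of (a). For terminal objects in fibres, fix $X$ and take a precartesian lifting $\tau:T\to 1_\A$ of the unique morphism $X\to F(1_\A)$. For $A$ in $F^{-1}(X)$, the unique map $A\to 1_\A$ lies over $X\to F(1_\A)$, so it factors uniquely through $\tau$ by a morphism over $1_X$. Hence $T$ is terminal in $F^{-1}(X)$.

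\textbf{(b)$\Rightarrow$(a).} Given $\theta:X\to F(B)$, let $T$ be terminal in $F^{-1}(X)$ and form $T\times B$ by lifting the product $X\times F(B)$. The morphism $\langle 1,\theta\rangle:X\to F(T\times B)$ is a split monomorphism, so by (b) it has a precartesian lifting, i.e.\ a terminal object of $F_{T\times B}^{-1}(X,\langle 1,\theta\rangle)$. Lemma~\ref{product_lemma} (its hypothesis that $F$ preserves the relevant binary product holds by the remark above) identifies this category isomorphically with $F_B^{-1}(X,\theta)$. So the latter has a terminal object, which is a precartesian lifting of $\theta$.

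\textbf{(a)$\Rightarrow$(c).} The fibre of $F^\two$ over $\theta:X\to Y$ has objects $\alpha:A\to B$ with $F(\alpha)=\theta$, and morphisms pairs $(u,w)$ lying over identities and commuting with the $\alpha$'s.
\begin{itemize}
\item Take $S$ terminal in $F^{-1}(Y)$ (using (a)$\Rightarrow$(b)) and a precartesian lifting $\tau:T\to S$ of $\theta$.
\item Given $\alpha:A\to B$, the second component is forced to be the unique $w:B\to S$ in $F^{-1}(Y)$.
\item Then $w\alpha$ lies over $\theta$, so it factors uniquely as $\tau u$ with $u$ over $1_X$.
\end{itemize}
Hence $\tau$ is terminal in this fibre.

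\textbf{(c)$\Rightarrow$(a).} I would proceed in two steps.
\begin{itemize}
\item \emph{Terminal objects in fibres.} Let $1$ be terminal in $\X$ and apply (c) to $Y\to 1$, obtaining a terminal object $\sigma:S\to T'$. For $A$ in $F^{-1}(Y)$, consider the object $A\to 1_\A$. A morphism to $\sigma$ is a pair $(a,b)$ with $\sigma a=b$, so $b$ is determined by $a$. Uniqueness of the pair therefore gives uniqueness of $a:A\to S$, and existence is immediate. So $S$ is terminal in $F^{-1}(Y)$.
\item \emph{Precartesian lifting to terminal objects.} For $\theta:X\to Y$, let $\tau:T\to S'$ be terminal in the fibre of $F^\two$ over $\theta$. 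The same argument applied to objects $\beta$ of the form $A\to S$ shows $S'$ is terminal in $F^{-1}(Y)$. Testing against objects $\alpha:A\to S'$ then shows $\tau$ is precartesian.
\item \emph{General codomain.} Now take $\theta:X\to F(B)$, let $S$ be terminal over $F(B)$, and let $\tau:T\to S$ be a precartesian lifting. Let $w:B\to S$ be the unique map over $1_{F(B)}$. Lift the pullback square in $\X$ with vertex $X$, sides $\theta$ and $1_{F(B)}$, to a pullback of $\tau$ along $w$ in $\A$ with vertex $P$ over $X$. The projection $P\to B$ is then a precartesian lifting of $\theta$.
\end{itemize}

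The main obstacle is this last step. One has to check that the pullback projection is precartesian, and that the lifted cone sits over the given square with identity components. Weak creation gives a limit cone mapped to \emph{a} limit cone, so I may need to adjust by an isomorphism in the fibre. Precartesianness of $P\to B$ should then follow directly from the pullback property combined with precartesianness of $\tau$.
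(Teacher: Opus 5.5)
Your arguments for (a)$\Rightarrow$(b), (b)$\Rightarrow$(a) and (a)$\Rightarrow$(c) are correct. Your (b)$\Rightarrow$(a) is exactly the paper's use of Lemma~\ref{product_lemma}, while the paper gets the remaining implications by citing Theorem 2.24 of \cite{GRAY:2017}. However, the first two steps of your (c)$\Rightarrow$(a) do not work as written.

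Let $\sigma\colon S\to T'$ be terminal in the fibre of $F^\two$ over $Y\to 1$. A morphism from $!_A\colon A\to 1_\A$ to $\sigma$ is a pair $(a,b)$ with $b\colon 1_\A\to T'$ and $\sigma a=b\,!_A$, not $\sigma a=b$. To get uniqueness of $a$ from uniqueness of such pairs, you must know that \emph{every} $a\colon A\to S$ over $1_Y$ extends to a pair, i.e.\ that $\sigma a$ factors through $!_A$. Saying that $b$ is determined by $a$ addresses the converse implication, and is not justified anyway, since $!_A$ need not be an epimorphism. Nothing you have established forces the factorization, because at that point you know nothing about $T'$. The same defect affects ``the same argument'' in your second step. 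Testing against objects $A\to S$ over $\theta$ produces a map $S\to S'$, and hence maps $B\to S'$ for $B$ in $F^{-1}(Y)$, but not their uniqueness.

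The missing idea is this: the codomain of a terminal object in a fibre of $F^\two$ is terminal in its fibre of $F$, whenever that fibre has a terminal object. This follows from a retraction argument.
\begin{itemize}
\item In the first step, let $t\colon T'\to 1_\A$ be the unique map. Then $(1_S,t)$ is a morphism from $\sigma$ to $!_S\colon S\to 1_\A$ in the fibre over $Y\to 1$. Composing it with the unique morphism $(a,b)$ in the opposite direction gives an endomorphism $(a,bt)$ of the terminal object $\sigma$, so $bt=1_{T'}$. Since $tb$ is also the identity of the terminal object $1_\A$, the object $T'$ is terminal in $\A$. Now $\sigma a=b\,!_A$ holds automatically for every $a$, and your conclusion that $S$ is terminal in $F^{-1}(Y)$ follows.
\item In the second step, apply the same trick to $(1_T,t)\colon\tau\to t\tau$, where now $t\colon S'\to S$ is the unique map over $1_Y$ into the terminal object $S$ of $F^{-1}(Y)$ from the first step. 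This gives $S'\cong S$ over $1_Y$.
\end{itemize}
After this repair, your test against objects $\alpha\colon A\to S'$ and your final pullback step are correct.

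Your closing worry about adjusting by an isomorphism is unnecessary. With the paper's definition, weak creation says that the given limiting cone in $\X$ (vertex $X$, legs $1_X$ and $\theta$) is itself the image of a pullback in $\A$. The precartesian property of the projection then follows exactly as you indicate.
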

\begin{proof}
The equivalence of (a) and (c) was established in Theorem 2.24 of \cite{GRAY:2017}.
Since the same theorem tells us that (a) implies that $F$ admits terminal objects in its fibers, 
we see that (a) implies (b). The converse follows from the previous lemma.
\end{proof}
\begin{remark}\label{precarteian_above_mono}
Note that if $F:\A\to \X$ is a functor which weakly creates limits, then each precartesian
morphism above a monomorphism is a monomorphism. Indeed if $f:A\to B$ is such an $F$-precartesian
morphism and $F(f)$ is a monomorphism, then by assumption the kernel pair $k_1,k_2:K\to A$
of $f$ exists and can be chosen to be above the kernel pair $1_{F(A)},1_{F(A)}:F(A)\to F(A)$
of $F(f)$. It immediately follows that $k_1=k_2$ and hence $f$ is a monomorphism.
\end{remark}
\begin{lemma}\label{fiberous adunctions}
For a diagram of adjunctions
\[
\xymatrix{
\S \ar@<-0.75ex>[d]_{I}^{\!\!\dashv} \ar[r]^{Q} & \B\ar@<-0.75ex>[d]_{J}^{\!\!\dashv}\\
\S' \ar@<-0.75ex>@{->}[u]_{R} \ar[r]^{Q'} & \B'\ar@<-0.75ex>@{->}[u]_{S}
}
\]
with units and counits $\eta: 1_{\S}\to RI$, $\epsilon : IR\to 1_{\S'}$, $\phi:1_{\B}\to SJ$ and $\theta : JS\to 1_{\B'}$. 
If the downward directed and upward directed arrows commute
(that is, $Q'I =JQ$ and $QR=SQ'$), then $Q'\circ \epsilon = \theta \circ Q'$ if and only if $Q \circ \eta = \phi Q$.
When this is the case for each $B$ in $\B$ the adjunction $I \dashv R$
restricts along the inclusions $Q^{-1}(B)\to \S$ and $Q'^{-1}(J(B)) \to \S'$ to
an adjunction $I_B \dashv R_B$ as soon as $SJ=1_{\B}$ and $\phi = 1_{1_\B}$.
\end{lemma}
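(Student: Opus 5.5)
We need to prove the equivalence $Q'\epsilon = \theta Q'$ iff $Q\eta = \phi Q$, and then the restriction statement. The natural tool is that the unit and counit determine one another through the adjunction bijection, together with the triangle identities. Throughout we use $Q'I = JQ$ and $QR = SQ'$. The plan is to express $Q\eta$ in terms of $\phi$, $\theta$ and $Q'\epsilon$, and symmetrically.

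For the forward direction, assume $Q'\epsilon = \theta Q'$. Applying $Q$ to the triangle identity $R\epsilon\circ\eta R = 1_R$ gives
\[
SQ'\epsilon \circ Q\eta R = 1_{QR}.
\]
This alone does not isolate $Q\eta$, so we use the standard formula instead. For any $A$ in $\S$, $\eta_A$ is the unique morphism whose transpose is $1_{IA}$. Hence $\eta$ is characterized by $\epsilon_{IA}\circ I\eta_A = 1_{IA}$ together with naturality. We compute
\[
Q\eta_A \;=\; SQ'\epsilon_{IA}\circ \phi_{SQ'IA}\circ Q\eta_A \;=\; \ldots
\]
More cleanly, write $\eta_A = R\epsilon_{IA}\circ \eta_{RIA}\circ \eta_A$, which follows from the triangle identity $R\epsilon_{I}\circ\eta_{RI}=1$. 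Then use naturality of $\eta$ in the form $\eta_{RIA}\circ\eta_A = RI\eta_A\circ\eta_A$; this route is circular.

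A better approach is to argue via transposes in $\B$. Both $Q\eta_A$ and $\phi_{QA}$ are morphisms $QA\to SJQA = QRIA$. Their transposes under $J\dashv S$ are $\theta_{JQA}\circ JQ\eta_A$ and $1_{JQA}$. Now compute
\[
\theta_{JQA}\circ JQ\eta_A = \theta_{Q'IA}\circ Q'I\eta_A = Q'\epsilon_{IA}\circ Q'I\eta_A = Q'(\epsilon_{IA}\circ I\eta_A) = 1.
\]
Since transposition is bijective, $Q\eta_A = \phi_{QA}$. The converse is the dual argument: the transposes of $Q'\epsilon_C$ and $\theta_{Q'C}$ under $J\dashv S$ are $SQ'\epsilon_C\circ\phi_{SQ'C} = QR\epsilon_C\circ Q\eta_{RC} = Q(R\epsilon_C\circ\eta_{RC}) = 1$ and $1$ respectively.

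For the restriction, assume $SJ = 1$ and $\phi = 1$. If $QA = B$, then $Q'IA = JB$. If $Q'C = JB$, then $QRC = SJB = B$. So $I$ and $R$ restrict to $I_B$ and $R_B$ between the fibres. The unit component $\eta_A$ lies over $Q\eta_A = \phi_B = 1_B$, so it is a morphism in $Q^{-1}(B)$. For the counit, by the equivalence just proved $Q'\epsilon_C = \theta_{JB}$. The triangle identity $\theta_J\circ J\phi = 1$ with $\phi = 1$ gives $\theta_{JB} = 1$, so $\epsilon_C$ lies in the fibre. The triangle identities then hold in the fibres because they hold in $\S$ and $\S'$, which gives $I_B\dashv R_B$.

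The only delicate step is identifying the correct transposes; the rest is routine.
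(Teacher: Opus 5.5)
Your final argument is correct and is essentially the paper's. You prove $Q\eta_A=\phi_{QA}$ by showing both have identity transpose under $J\dashv S$, which is what the paper calls the ``universal property of $\theta$''. You also write out the dual converse that the paper leaves to the reader, and you check the restriction using $\phi_B=1_B$ and the triangle identity $\theta_{JB}\circ J\phi_B=1_{JB}$, exactly as the paper does. In a clean write-up, drop the abandoned first attempts. Also add the one-line check that fibre morphisms are preserved, $Q'I(f)=JQ(f)=J(1_B)=1_{JB}$ and $QR(g)=SQ'(g)=S(1_{JB})=1_B$, which the paper includes and you state only for objects.
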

\begin{proof}
Suppose that $Q'\circ \epsilon = \theta \circ Q'$. Let $S$ be an element of $\S$.
Noting that $JQRI(S)=JSJQ(S)$ we have $\theta_{Q'I(s)}JQ(\eta_S)=Q'(\epsilon_{I(S)})Q'(I(\eta_S))=Q'(1_{I(S)})=1_{Q(S)}$,
and hence by the universal property of $\theta$ we have $Q(\eta_S)=\phi_{Q(S)}$. Therefore $Q\circ \eta = \phi \circ Q$.
The converse is dual. Now suppose that we have $Q'\circ \epsilon = \theta \circ Q'$, $Q\circ \eta = \phi \circ Q$, $SJ=1_{\B}$
and $\phi=1_{1_\B}$.
Suppose that $f:S\to T$ is in $Q^{-1}(B)$ and $g:S'\to T'$ is in $Q^{-1}(J(B))$. We have $Q'I(f)=JQ(f)=J(1_B)=1_{J(B)}$,
$QR(g)=SQ'(g)=S(1_{J(B)})=1_{SJ(B)}=1_B$, $Q'(\epsilon_S')=\theta_{Q'(S')}=\theta_{J(B)}=J(1_{B}))\theta_{J(B)}=J(\phi_B)\theta_{J(B)}=1_{J(B)}$,
and $Q(\eta_S)=\phi_{Q(S)}=\phi_B=1_B$. It follows that the adjunction restricts as desired.
\end{proof}
Let $\bS=$
\[
\xymatrix{
\A & \S \ar[l]_{P} \ar[r]^{Q} & \B
}
\]
be a span of categories. Consider the following diagram of spans
\[
\xymatrix{
\A \ar[d]_{\Delta}& \underline \S^\two \ar[d]^{I} \ar[l]_{\underline P^\two} \ar[r]^{\underline Q^\two} & \B^\two\ar@{=}[d]\\
\A^\two & \S^\two \ar[l]_{P^\two} \ar[r]^{Q^\two} & \B^\two
}
\]
where the left hand square is a pullback. Let us choose $\underline \S^{\two}$ so that
it is the full subcategory $\S^{\two}$ with objects those objects $(S,S',\sigma)$ such that $P(S)=P(S')$ and $P(\sigma)=1_{P(S)}$.
Let us write $\underline \bS^\two$ and $\bS^\two$
for the span at the top and bottom of the previous diagram.

It is easy to check that $\textnormal{dom} \dashv \Delta : \S \to \S^{\two}$ restricts to
$\underline {\textnormal{dom}} \dashv \underline \Delta : \S \to \underline\S^{\two}$ and furthermore the diagram
\[
\xymatrix{
\S \ar@<-0.75ex>[d]_{\underline \Delta}^{\!\!\dashv} \ar[r]^{Q} & \B\ar@<-0.75ex>[d]_{\Delta}^{\!\!\dashv}\\
\underline{\S^{\two}} \ar@<-0.75ex>@{->}[u]_{\underline{\textnormal{dom}}} \ar[r]^{\underline Q^{\two}} & \B^{\two}\ar@<-0.75ex>@{->}[u]_{\textnormal{dom}}
}
\]
satisfies the requirements of Lemma \ref{fiberous adunctions} so that for each $B$ there is an induced adjunction
\[
\xymatrix{
Q^{-1}(B)\ar@<1.2ex>[r]^-{\underline{\Delta}_B}_-{\bot} & \underline {Q^{\two}}^{-1}(B,B,1_B)\ar@<1.2ex>[l]^-{\underline {\textnormal{dom}}_B}
}
\]
We obtain:
\begin{lemma}\label{two_to_one}
If $\underline {Q^{\two}}$ has terminal objects in its fibers, then so does $Q$.\qed
\end{lemma}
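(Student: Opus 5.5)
The plan is to use the induced adjunction $\underline{\Delta}_B \dashv \underline{\textnormal{dom}}_B$ between $Q^{-1}(B)$ and $\underline {Q^{\two}}^{-1}(B,B,1_B)$. Right adjoints preserve all limits that exist, and in particular terminal objects. So if $T$ is a terminal object in the fiber $\underline {Q^{\two}}^{-1}(B,B,1_B)$, then $\underline{\textnormal{dom}}_B(T)$ is terminal in $Q^{-1}(B)$. Since $B$ is arbitrary, $Q$ has terminal objects in its fibers.

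To make this self-contained, I will check terminality directly from the adjunction. For $S$ in $Q^{-1}(B)$, the adjunction gives a natural bijection
\[
\hom_{Q^{-1}(B)}(S,\underline{\textnormal{dom}}_B(T))\cong \hom_{\underline {Q^{\two}}^{-1}(B,B,1_B)}(\underline{\Delta}_B(S),T),
\]
and the right-hand side is a singleton because $T$ is terminal. Here I must confirm that $\underline{\Delta}_B(S)=(S,S,1_S)$ really lies in the fiber over $(B,B,1_B)$. This holds because $Q^\two(S,S,1_S)=(Q(S),Q(S),1_{Q(S)})=(B,B,1_B)$, and because $P(1_S)=1_{P(S)}$, so the object belongs to $\underline\S^\two$.

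The only point requiring care is that the adjunction genuinely restricts to the fibers. This is exactly what Lemma \ref{fiberous adunctions} provides, once its hypotheses are verified for the square involving $\underline\Delta$, $\underline{\textnormal{dom}}$, $\Delta$ and $\textnormal{dom}$. The hypotheses are as follows.
\begin{enumerate}[(i)]
\item Both squares commute: $\underline Q^\two\underline\Delta=\Delta Q$ and $Q\,\underline{\textnormal{dom}}=\textnormal{dom}\,\underline Q^\two$.
\item $\textnormal{dom}\circ\Delta=1_\B$, and the unit $\phi$ of $\textnormal{dom}\dashv\Delta$ on $\B$ is the identity.
\item The compatibility condition holds. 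The unit of $\underline{\textnormal{dom}}\dashv\underline\Delta$ at $S$ is $1_S$, so applying $Q$ gives the identity, which equals $\phi_{Q(S)}$.
\end{enumerate}
All of these are immediate from the definitions. Once they are in place, the lemma follows from preservation of terminal objects by right adjoints.
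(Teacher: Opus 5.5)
Your proposal is correct and follows the paper's argument exactly: the lemma comes from the restricted adjunction $\underline{\Delta}_B\dashv\underline{\textnormal{dom}}_B$ obtained from Lemma \ref{fiberous adunctions}, whose right adjoint $\underline{\textnormal{dom}}_B$ sends a terminal object of $\underline{Q^{\two}}^{-1}(B,B,1_B)$ to a terminal object of $Q^{-1}(B)$. One small notational point: in (ii) and (iii) you write $\textnormal{dom}\dashv\Delta$, as the paper's text does, but the adjunction you actually use (and the correct one) is $\Delta\dashv\textnormal{dom}$, whose unit is the identity.
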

Recall that a span $\bS$ is right regular \cite{JANELIDZE:1978} (which was first considered as part of Yoneda's definition of regular span \cite{YONEDA:1961}) if for each object $S$ in $\S$ and  each morphism $f : A\to P(S)$ in $\A$ there
exists a $P$-cartesian morphism $\sigma : \bar S\to S$ above $f$ such that $Q(\sigma) = 1_{Q(S)}$. 
\begin{lemma}
If $\bS$ is right regular, then the functor $I$ has a right adjoint $R$ such that $\underline Q^\two\circ R = Q^\two$ and $\underline P^\two \circ R = \textnormal{dom}\circ P^\two$.
\end{lemma}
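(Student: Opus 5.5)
Recall that $I:\underline\S^\two\to\S^\two$ is the inclusion of the full subcategory on objects $(S,S',\sigma)$ with $P(S)=P(S')$ and $P(\sigma)=1_{P(S)}$. I will construct $R$ objectwise by exhibiting, for each object $(S,S',\sigma)$ of $\S^\two$, a coreflection into $\underline\S^\two$. Then I will check the two equations on objects and morphisms.

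\textbf{Construction.} Given $(S,S',\sigma)$, consider $P(\sigma):P(S)\to P(S')$. By right regularity applied to $S'$ and $f=P(\sigma)$, choose a $P$-cartesian morphism $\tau:\bar S'\to S'$ above $P(\sigma)$ with $Q(\tau)=1_{Q(S')}$. Since $P(\bar S')=P(S)$ and $P(\sigma)=P(\tau)\circ 1_{P(S)}$, cartesianness gives a unique $\bar\sigma:S\to\bar S'$ with $P(\bar\sigma)=1_{P(S)}$ and $\tau\bar\sigma=\sigma$. Set $R(S,S',\sigma)=(S,\bar S',\bar\sigma)$, which lies in $\underline\S^\two$. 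The counit $\epsilon:IR(S,S',\sigma)\to(S,S',\sigma)$ is the pair $(1_S,\tau)$, which is a morphism of $\S^\two$ because $\tau\bar\sigma=\sigma$.

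\textbf{Universal property.} Let $(T,T',\rho)$ be in $\underline\S^\two$ and let $(a,a'):(T,T',\rho)\to(S,S',\sigma)$ be a morphism, so $a'\rho=\sigma a$. I need a unique $b'$ with $(a,b'):(T,T',\rho)\to(S,\bar S',\bar\sigma)$ and $\tau b'=a'$. Since $P(T)=P(T')$ and $P(\rho)=1$, we have $P(a')=P(a'\rho)=P(\sigma)P(a)=P(\tau)P(a)$. Cartesianness of $\tau$ then gives a unique $b'$ with $P(b')=P(a)$ and $\tau b'=a'$. It remains to check $b'\rho=\bar\sigma a$. Both morphisms lie above $P(a)$, since $P(b'\rho)=P(a)\cdot 1$ and $P(\bar\sigma a)=P(a)$. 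Both also compose with $\tau$ to $a'\rho=\sigma a$. By the uniqueness part of cartesianness they are equal. Uniqueness of $b'$ as a morphism in $\S^\two$ follows the same way: any valid $b'$ satisfies $\tau b'=a'$, and from $b'\rho=\bar\sigma a$ we get $P(b')=P(a)$, so cartesian uniqueness applies. Hence $R$ extends uniquely to a functor right adjoint to $I$ with counit $(1,\tau)$.

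\textbf{The equations.} On objects, $\underline Q^\two R(S,S',\sigma)=(Q(S),Q(\bar S'),Q(\bar\sigma))$, and $Q(\bar S')=Q(S')$. Moreover $Q(\bar\sigma)=Q(\tau)Q(\bar\sigma)=Q(\sigma)$ because $Q(\tau)=1$. On morphisms, $R(a,a')=(a,b')$ with $Q(b')=Q(\tau b')=Q(a')$. So $\underline Q^\two\circ R=Q^\two$. Similarly, $\underline P^\two R(S,S',\sigma)$ is the object $P(S)$ of $\A$ (the identity $P(\bar\sigma)$ on $P(S)$). This is $\textnormal{dom}(P^\two(S,S',\sigma))$, and on morphisms it gives $P(a)$. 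So $\underline P^\two\circ R=\textnormal{dom}\circ P^\two$.

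The main obstacle is the compatibility step, verifying $b'\rho=\bar\sigma a$ and uniqueness. It is handled as above by using the condition $P(\rho)=1$, which places both composites above the same morphism $P(a)$. One further point: the equations must hold strictly, not just up to isomorphism. This is ensured because the chosen lift $\tau$ satisfies $Q(\tau)=1$ on the nose and $R$ keeps the domain $S$ unchanged.
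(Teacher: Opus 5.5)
Your proof is correct and follows the paper's argument essentially step for step. You use the same coreflection $R(S,S',\sigma)=(S,\bar S',\bar\sigma)$ built from the right-regular cartesian lift, the same counit $(1_S,\tau)$, and the same use of $P(\rho)=1$ to factor through $\tau$, and you spell out the compatibility $b'\rho=\bar\sigma a$ and the uniqueness that the paper leaves as an easy calculation. One small imprecision: on morphisms the defining equation is $\tau_2 b'=a'\tau_1$ rather than $\tau b'=a'$, but since $Q(\tau_1)=1$ your conclusion $Q(b')=Q(a')$ still holds.
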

\begin{proof}
Let $(S,S',\sigma)$ be an object in $\S^\two$ and write $p:\tilde S \to S'$ for the $P$-cartesian
lifting of $P(\sigma)$ to $S'$ such that $Q(p) = 1_{Q(S')}$. It follows that there is a unique
morphism $\tilde \sigma : S\to \tilde S$ such that $\sigma = p \tilde \sigma$ and $P(\tilde \sigma)=1_{P(S)}$. We define $R(S,S',\sigma)=(S,\tilde S,\tilde \sigma)$. We will show that the morphism forming the right hand square in the diagram (ignore the left hand part and outer arrows)
\[
\xymatrix{
T \ar@/^3ex/[rr]^{u}\ar[d]_{\tau} \ar@{-->}[r]_{u}& S \ar[r]_{1_S}\ar[d]_{\tilde \sigma} & S\ar[d]^{\sigma}\\
T' \ar@/_3ex/[rr]_{v}\ar@{-->}[r]^{v'}& \tilde S \ar[r]^{p} & S'
}
\]
is the counit of the adjunction $I\dashv R$. Suppose that $(u,v)$ as displayed above is a
morphism from $I(T,T',\tau)=(T,T',\tau)$ to $(S,S',\sigma)$. Since $P(v)= P(v)P(\tau) = P(\sigma)P(u)$
it follows that there is a unique morphism $v':T'\to \tilde S$ such that $p v' = v$ and $P(v')=P(u)$.
An easy calculation using the universal property of $p$ shows that $v' \tau = \tilde \sigma u$ producing
a morphism $(u,v'):(T,T',\tau)\to (S,\tilde S, \tilde \sigma)$ which when composed with $(1_S,p)$
gives $(u,v)$. Uniqueness of a morphism with this property follows easily from the universal property of $p$. To show that $\underline Q^\two \circ R = Q^{\two}$ just apply $Q$ to the right hand square of the above diagram. It is immediate that $\underline P^\two \circ R = \textnormal{dom}\circ P^\two$.
\end{proof}
\begin{proposition}\label{span:right_regular_reform}
Suppose $\bar S$ is right regular.
The functor $R$, from above, 
maps terminal objects in fibers of $Q^\two$ to terminal
objects in the fibers of $\underline Q^\two$. The functor $\underline Q^\two$ has terminal
objects in its fibers if and only if $Q^\two$ does.
\end{proposition}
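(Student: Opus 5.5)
Both claims follow from the adjunction $I\dashv R$ of the previous lemma, combined with the identity $\underline Q^\two\circ R = Q^\two$ and the evident identity $Q^\two\circ I=\underline Q^\two$ ($I$ is a full inclusion commuting with the maps to $\B^\two$). The first step is to check that the counit $(1_S,p)\colon IR(S,S',\sigma)\to(S,S',\sigma)$ lies in a fiber of $Q^\two$: we have $Q(1_S)=1$ and $Q(p)=1_{Q(S')}$. The unit is an identity on objects of $\underline\S^\two$, since for $(T,T',\tau)$ with $P(\tau)=1$ the cartesian lifting of $P(\tau)=1$ can be taken to be $1_{T'}$. Hence the hom-set bijection
\[
\underline\S^\two(T,R(S))\cong \S^\two(I(T),S),\qquad g\mapsto \epsilon_S\circ I(g),
\]
restricts to a bijection between morphisms lying over $1$ in $\B^\two$, whenever $\underline Q^\two(T)=Q^\two(S)$. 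Alternatively, this is exactly the situation of Lemma~\ref{fiberous adunctions} with $J,S$ identities, which gives restricted adjunctions $I_D\dashv R_D$ between $(\underline Q^\two)^{-1}(D)$ and $(Q^\two)^{-1}(D)$ for each $D$ in $\B^\two$.

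For the first claim, let $E$ be terminal in $(Q^\two)^{-1}(D)$. For any $T$ in $(\underline Q^\two)^{-1}(D)$, the morphisms $T\to R(E)$ in the fiber correspond bijectively to the morphisms $I(T)\to E$ in the fiber of $Q^\two$. There is exactly one of the latter, so $R(E)$ is terminal. Right adjoints preserve terminal objects, and the restricted adjunction makes this immediate. The only point needing care is that a morphism $g$ lies over $1_D$ iff $\epsilon\circ I(g)$ does; this holds because $\underline Q^\two(\epsilon)=1$.

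For the equivalence, one direction is the first claim. For the converse, suppose $E'$ is terminal in $(\underline Q^\two)^{-1}(D)$; I would show that $I(E')$ is terminal in $(Q^\two)^{-1}(D)$. Take any $S=(S,S',\sigma)$ in that fiber. Morphisms $S\to I(E')$ over $1_D$ are not directly governed by the adjunction, since $I$ is a left adjoint. This is where I expect the main obstacle, and I would handle it using the structure of $R$. First produce the morphism $(1_S,p)^{\ast}$: compose the unique fiber map $R(S)\to E'$ with $\tilde\sigma$ data to get a map $S\to I(E')$. For the first component take $u\colon S\to E$, the domain component of the unique map $R(S)=(S,\tilde S,\tilde\sigma)\to E'$. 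For the second component, use that $E'=(E,E',\epsilon')$ has $P(\epsilon')=1$ and apply cartesianness of $E'$ along $P(\sigma)$ to get the map $S'\to E'$.

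Rather than push this through, I would use the cleaner route: $R$ is essentially surjective onto fibers up to the reflection $RI\cong1$. Explicitly, $E'\cong RI(E')$, so it suffices to show that $I(E')$ is terminal whenever $RI(E')$ is. For this I would use the fact that each object $S$ of the fiber of $Q^\two$ receives the fiber map $\epsilon_S\colon IR(S)\to S$, and that $\epsilon_S$ is $P^\two$-cartesian over $(1,P(\sigma))$. Existence then follows from the unique map $R(S)\to E'$ composed appropriately. Uniqueness follows by the universal property of the $P$-cartesian $p$, since two maps $S\to I(E')$ over $1_D$ that agree after precomposing with $\epsilon_S$ must agree: the domain components agree because $\epsilon_S$ has identity domain component, and the codomain components agree by compatibility $v\sigma=\epsilon' u$ together with cartesianness. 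Verifying this last uniqueness is the delicate step, and if it fails I would fall back on showing directly that terminality in $(\underline Q^\two)^{-1}(D)$ transfers through the equivalence of the restricted fibers.
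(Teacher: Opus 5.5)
Your first claim is correct and argued exactly as in the paper. The identities $Q^\two I=\underline Q^\two$, $\underline Q^\two R=Q^\two$ and $Q^\two(\epsilon)=1$ put you in Lemma~\ref{fiberous adunctions} with $J$ and $S$ identities. (It is $Q^\two$, not $\underline Q^\two$, that you apply to $\epsilon$.) Hence $R$ restricts to right adjoints on fibers and preserves terminal objects.

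The converse has a genuine gap: the object you propose, $I(E')$ with $E'=(\bar S,\bar S',\bar\sigma)$, is in general \emph{not} terminal in $(Q^\two)^{-1}(D)$.
\begin{itemize}
\item \emph{Existence fails.} A morphism $(S,S',\sigma)\to I(E')$ needs a codomain component $S'\to\bar S'$ in $Q^{-1}(B')$. The unique map $R(S,S',\sigma)=(S,\tilde S,\tilde\sigma)\to E'$ only gives $\tilde S\to\bar S'$, and the cartesian map $p\colon\tilde S\to S'$ points the wrong way. So there is nothing to ``compose appropriately''.
\item \emph{Uniqueness fails too.} The equations $w_1p=w_2p$ and $w_1\sigma=w_2\sigma$ do not force $w_1=w_2$, since cartesianness controls maps into $S'$, not out of it.
\item \emph{The fallbacks do not help.} The reduction via $E'\cong RI(E')$ is circular. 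There is also no equivalence of fibers: $I_D$ only exhibits $(\underline Q^\two)^{-1}(D)$ as a coreflective full subcategory, and terminal objects need not pass from a coreflective subcategory to the ambient one.
\item \emph{Counterexample.} Take the span \eqref{split ext span} for groups and $D=(X,X,0)$. Then $E'$ is given by the actions $\pi_1,\pi_2$ of $\Aut X\times\Aut X$ on $X$, so $\bar S'=X\rtimes_{\pi_2}(\Aut X)^2$. The object $X\rtimes_{\pi_1}(\Aut X)^2\to X\rtimes\Aut X$, $(x,\theta_1,\theta_2)\mapsto(0,\theta_2)$, admits no morphism to $I(E')$ when $\Aut X\neq 1$. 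The base components $\mu,\nu$ would have to satisfy $\pi_1\mu=\pi_1$, $\pi_2\nu=1$ and $\mu=\nu\pi_2$, giving $\pi_1=\pi_1\nu\pi_2$, which is impossible.
\end{itemize}

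The missing idea is to replace the codomain $\bar S'$ by a terminal object $\bar T$ of $Q^{-1}(B')$; this exists by Lemma~\ref{two_to_one} under the hypothesis on $\underline Q^\two$.
\begin{enumerate}
\item Let $t\colon\bar S'\to\bar T$ be the unique map. Compare $t$ with a $P$-cartesian lifting of $P(t)$ to $\bar T$ lying over $1_{B'}$, and use terminality of $E'$, to show that $t$ is itself $P$-cartesian.
\item Show that $(\bar S,\bar T,t\bar\sigma)$ is terminal in $(Q^\two)^{-1}(B,B',\beta)$. Given $(S,T,g)$, factor $g=r\tilde g$ through the cartesian lifting $r\colon\tilde S\to T$ of $P(g)$; that is, pass to $R(S,T,g)$.
\item Take the unique $(u,v)\colon(S,\tilde S,\tilde g)\to E'$ and the unique $w\colon T\to\bar T$. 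Terminality of $\bar T$ gives $tv=wr$, so $(u,w)$ is a morphism $(S,T,g)\to(\bar S,\bar T,t\bar\sigma)$.
\item Uniqueness follows from cartesianness of $t$ together with terminality of $E'$.
\end{enumerate}
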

\begin{proof}
The previous lemma together with Lemma \ref{fiberous adunctions} implies that $R$ preserves terminal objects in fibers and hence
$\underline Q^\two$ has terminal objects in its fibers as soon as $Q^\two$ does.  Now
suppose that $\underline Q^\two$ has terminal objects in its fibers. Lemma \ref{two_to_one} tells us that $Q$
has terminal objects in its fibers.

Now suppose that $\beta : B\to B'$ is a morphism in $\B$ and let
$(\bar S, \bar S', \bar \sigma)$ be a terminal object in $\underline Q^{\two^{-1}}(B,B',\beta)$.
Let $\bar T$ be a terminal object in $Q^{-1}(B')$ and let $p:\bar S' \to \bar T$ be
the unique morphism into the terminal object in $Q^{-1}(B')$. Let us observe
that $p$ is $P$-cartesian. To that end suppose
that $q: T'\to \bar T$ is a $P$-cartesian lifting of $P(p)$ to $\bar T$ such
that $Q(q)=1_{Q(\bar T)}$. It follows that there is a unique morphism $k:\bar S'\to T'$
such that $P(k)=1_{P(T')}$. We obtain the diagram 
\[
\xymatrix{
\bar S \ar[d]_{\bar \sigma} \ar[r]^{1_{\bar S}} & \bar S \ar[d]^{k\bar \sigma} \ar[r]^{l} & \bar S \ar[d]^{\bar \sigma}\\
\bar S' \ar[r]_{k} & T' \ar[r]_{m} & \bar S'
}
\]
in 
$\underline Q^{\two^{-1}}(B,B',\beta)$ where the right hand morphism is
the unique morphism into the terminal object. The universal property tells us
that $l1_{\bar S}=1_{\bar S}$ and $mk=1_{\bar S'}$ and hence $l=1_{\bar S}$.
Since $P(km)=1_{P(\bar S')}$ it follows
that $km=1_T'$ (via the universal property of $q$). It now follows that $p$ is
$P$-cartesian. We will now show that
$(\bar S,\bar T, p\bar \sigma)$ is a terminal object in $Q^{\two^{-1}}(B,B',\beta)$.
Let $(S,T,g)$ be an object $Q^{\two^{-1}}(B,B',\beta)$ and let $r:\tilde S\to T$
be a $P$-cartesian lifting of $P(g)$ to $T$ such that $Q(r)=1_{Q(T)}$.
Let $\sigma : S\to \tilde S$ be the unique morphism such that $r\sigma = g$
and  $P(\sigma)=1_{P(S)}$. We have that $Q(\sigma) = Q(r)Q(\sigma)=Q(g)=\beta$.
Therefore the universal property of $(\bar S, \bar S',\bar \sigma)$ produces
a unique morphism $(u,v):(S,\tilde S, \sigma) \to (\bar S, \bar S',\bar \sigma)$.
On the other hand we have a unique morphism $w: T\to \bar T$. Since $\bar T$ is terminal,
it follows that $pv=wr$ and hence we obtain a
morphism $(u,w):(S,T,g)\to (\bar S,\bar T,p\bar\sigma)$ in $Q^{\two^{-1}}(B,B',\beta)$.
Suppose $(u',w')$ is another such morphism. The universal property of
$\bar T$ tells us that $w'=w$. Since $p$ is $P$-cartesian and
$p v \sigma = w r\sigma = w g =  p\bar \sigma u'$ and $P(v \sigma) = P(\bar \sigma u') = 1_{P(\bar S)}$
it follows that $v \sigma = \bar \sigma u'$ and hence
$(u',v)$ is a morphism from $(S,\tilde S,\sigma)$
to $(\bar S, \bar S',\bar \sigma)$ and hence must be $(u,v)$.
\end{proof}

\section{Action representability and normalizers}
In this section we explore the consequences of the results from
the previous section in the case where $\bS$ is the span displayed in \eqref{split ext span}.

Before doing so we need to recall a few definitions. In \cite{GRAY:2013b} the normalizer of a monomorphism $f:X'\to X$ in a pointed category $\C$ was defined as the terminal object
in the category of factorizations of $f$ of the form $(N,m,n)$ where $f=mn$, $m$ is a monomorphism, and $n$ is a normal monomorphism.
In \cite{BOURN_GRAY:2015} a definition of the normalizer of a monomorphism in an arbitrary category $\C$ was given, which is different to the definition in \cite{GRAY:2013b} even when $\C$ is pointed. Indeed, when $\C$ is pointed and has finite limits the normalizer of a monomorphism $f:X'\to X$ in the sense of \cite{BOURN_GRAY:2015} can equivalently be defined as a commutative diagram
\[
\xymatrix{
X' \ar@/_2ex/[dd]_{f} \ar[d]^{n} \ar[r]^{\kappa} & R \ar[d]^{\langle r_1,r_2\rangle}\\
N\ar[r]^-{\langle 0,1\rangle}\ar[d]^{m} & N\times N\\
X
}
\]
where the upper square is a pullback and the morphisms $r_1,r_2 : R\to N$ are the projections of an equivalence relation, which is universal amongst such commutative diagrams. It can be seen that the morphisms $m$ and $n$ are necessarily a monomorphism (see Proposition 2.3 \cite{BOURN_GRAY:2015})  and a Bourn-normal monomorphism, respectively, and furthermore that $n$ is a normal monomorphism (that is, the kernel of some morphism) when $\C$ is exact. The two definitions are equivalent when there is a unique effective equivalence relation to which each normal monomorphism is normal, and the image of a normal monomorphism along a regular epimorphism is normal, which is the case in the semi-abelian context.
In the protomodular context the relation $R$ in the above diagram is unique up to isomorphism and hence the normalizer in the sense of \cite{BOURN_GRAY:2015}
becomes the terminal factorization $f=mn$ where $n$ is a Bourn-monomorphism.

We have:
\begin{theorem}\label{main1}
For a pointed finitely complete category $\C$ the following are equivalent.
\begin{enumerate}[(a)]
\item $\C$ has generic split extensions and admits normalizers in the sense of \cite{BOURN_GRAY:2015};
\item the category $\C^\two$ has generic split extensions;
\item the functor $K$ is a prefibration.
\end{enumerate}
\end{theorem}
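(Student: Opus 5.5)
We apply the machinery of Section 2 to the span \eqref{split ext span}, with $\A=\C$ via $P$, $\S=\SE{\C}$ and $\B=\C$ via $K$. Then $Q=K$, and $\underline\S^\two$ is the category of morphisms of split extensions whose component on the base is an identity. Such a morphism with kernel part $f:X\to Z$ and common base $B$ is the same as a split extension in $\C^\two$ of $(B,B,1_B)$ with kernel $(X,Z,f)$. Kernels, products and pullbacks in $\C^\two$ are computed pointwise, so this identification is an isomorphism of categories. Under it, $\underline Q^\two$ becomes the kernel functor of $\SE{\C^\two}$ restricted to split extensions over objects of the form $(B,B,1_B)$.

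\textbf{Step 1 (b)$\Leftrightarrow$(c).} The span is right regular: given a split extension over $B$ and $g:B'\to B$, pulling back along $g$ gives a $P$-cartesian morphism whose kernel component is an identity. So Proposition \ref{span:right_regular_reform} applies: $\underline Q^\two$ has terminal objects in its fibers iff $Q^\two=K^\two$ does. By Proposition \ref{prefib}, (a)$\Leftrightarrow$(c), the latter holds iff $K$ is a prefibration. This needs $K$ to weakly create finite limits, and $K$ does: limits in $\SE{\C}$ are computed componentwise, because kernels commute with limits and split epis are stable.

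It remains to identify ``$\underline Q^\two$ has terminal objects in its fibers'' with (b). A terminal object in a fiber of the full kernel functor of $\SE{\C^\two}$ over $(X,Z,f)$ is a split extension over some $(B,B',\beta)$. We would show that when terminal objects exist, the base can be taken of the form $(B,B,1_B)$. Conversely, a terminal object among extensions over identities is terminal among all of them, since any split extension over $(B,B',\beta)$ maps to one over $(B,B,1_B)$ via the pair $(1,\beta)$ on bases. Equivalently, we can apply Lemma \ref{fiberous adunctions} to the adjunction $\textnormal{dom}\dashv\Delta$ on $\C^\two$, transported to $\SE{\C^\two}$, to pass terminal objects between the two fibers. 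This bookkeeping is routine.

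\textbf{Step 2 (c)$\Rightarrow$(a) and (a)$\Rightarrow$(c).} By Proposition \ref{prefib} (b), $K$ is a prefibration iff $K$ has terminal objects in fibers, i.e.\ generic split extensions exist, and $K$ admits precartesian liftings of monomorphisms. So it suffices to show: given generic split extensions, precartesian liftings of a mono $n:X'\to X$ exist iff the normalizer (in the sense of \cite{BOURN_GRAY:2015}) of $n$ exists.

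For (a)$\Rightarrow$ liftings, take the generic split extension $X\to H(X)\rightleftarrows\spexc{X}$ and the normalizer $X'\to N\to H(X)$ of $k_Xn$. Restrict the split extension along the subobject $N\to H(X)$: the relevant base is the pullback $N\cap i_X(\spexc{X})$, and the lifting has middle object $N$ and kernel $X'$. Using the protomodular-style universal property of normalizers, with $n$ Bourn-normal to the equivalence relation induced from the kernel pair of $p_X$, we then check that this is $K$-precartesian above $n$. For the converse, the precartesian lifting of $k_Xn$ itself, viewed through the generic extension, yields the terminal normal factorization.

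The main obstacle is this last equivalence, and in particular translating between Bourn-normality relative to an equivalence relation and being the kernel part of a split extension morphism. We would first handle it via Remark \ref{precarteian_above_mono}, which makes the lifted middle map a mono, and then verify the normalizer universal property directly.
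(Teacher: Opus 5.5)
\textbf{Step 2 has a genuine gap, and it is the real content of the theorem.} Your reduction there is the same as the paper's. By Proposition~\ref{prefib}, everything comes down to the statement that normalizers exist if and only if $K$ admits $K$-precartesian liftings of monomorphisms. The paper imports this from Theorem~2.8 of Bourn--Gray. Your attempt to prove it directly does not work, for three reasons.

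First, the candidate lifting $X'\to N\to N\cap i_X(\spexc{X})$ is not a split extension. Take groups, with $H(X)=X\rtimes\Aut(X)$. An element $(x,\theta)$ of the normalizer $N$ of $X'$ only satisfies $\theta(X')=x^{-1}X'x$, so in general $p_X$ does not map $N$ into your base. Moreover, the kernel of $p_X$ on $N$ is $N\cap X$, the normalizer of $X'$ in $X$, not $X'$. For $X'=0$ you get $N=H(X)$, and your candidate is the generic extension itself, whose kernel is $X$. The correct middle object is the pullback of $\langle mr_1,mr_2\rangle\colon R\to H(X)\times H(X)$ along $\langle i_Xp_X,1\rangle$; for groups this is $X'\rtimes\{\theta:\theta(X')=X'\}$. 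So the equivalence relation $R$ is indispensable data.

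Second, $\C$ is only pointed and finitely complete, so $R$ is not determined by $n$. Neither a ``protomodular-style'' universal property nor ``the equivalence relation induced from the kernel pair of $p_X$'' is available. The latter is in any case the relation to which $X$ is normal, not $X'$.

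Third, the converse direction is only gestured at. It also aims at a factorization of $k_Xn$ in $H(X)$, rather than of $n$ in $X$.

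The missing idea is that a normalizer of $n\colon X'\to X$, with its relation $r_1,r_2\colon R\to N$ and $m\colon N\to X$, is a $K$-precartesian lifting of $n$ to the kernel-pair extension $X\xrightarrow{\langle 0,1\rangle}X\times X$ (retraction $\pi_1$, section $\langle 1,1\rangle$). Concretely, the lifting is the split extension $X'\to R$ with retraction $r_1$ and diagonal section, mapped into the kernel-pair extension by $(n,\langle mr_1,mr_2\rangle,m)$. A lifting of $n$ to an arbitrary split extension $X\xrightarrow{\kappa}A$ (retraction $\alpha\colon A\to B$, section $\beta$) is then obtained by pulling back the lifting of $\kappa n$ along $(\kappa,\langle\beta\alpha,1\rangle,\beta)$ into the kernel-pair extension of $A$. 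This is exactly the construction given after the theorem. Generic split extensions play no role in this step.

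\textbf{Step 1 reaches a correct conclusion, but its final paragraph is unnecessary and, as stated, false.} Kernels in $\C^{\two}$ are pointwise, so $\SE{\C}^{\two}\cong\SE{\C^{\two}}$, and $K^{\two}$ becomes the kernel functor of $\C^{\two}$. Hence (b) says literally that $K^{\two}$ has terminal objects in its fibers. Proposition~\ref{prefib}, (a)$\Leftrightarrow$(c), then gives (b)$\Leftrightarrow$(c) at once. The paper uses Proposition~\ref{span:right_regular_reform} only for the later statement about $\TFSE{(X,Z,f)}{\C}$.

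Your bookkeeping makes two claims that fail:
\begin{enumerate}[(i)]
\item You claim the base of a generic split extension in $\C^{\two}$ can be taken of the form $(B,B,1_B)$. This fails already for groups: the generic split extension with kernel $(X,0,0)$ lies over $(\Aut(X),0,0)$, which is not of that form whenever $\Aut(X)\neq 1$.
\item You use $(1_B,\beta)$ to map an extension over $(B,B',\beta)$ to one over $(B,B,1_B)$. But $(1_B,\beta)$ is a morphism $(B,B,1_B)\to(B,B',\beta)$, not the reverse; it is the base part of the counit of $I\dashv R$. So it only produces maps out of identity-based extensions, and a terminal object among identity-based extensions is in general not terminal among all.
\end{enumerate}
Replace that paragraph with the direct identification above.
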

\begin{proof}
By Theorem 2.8 of \cite{BOURN_GRAY:2015} we know that the existence of normalizers
in the sense of \cite{BOURN_GRAY:2015} is equivalent to the functor $K$ admitting $K$-precartesian
liftings of monomorphisms. The proof of the claim now follows from Proposition \ref{prefib}.
\end{proof}
Let us explain explicitly how normalizers and generic split extensions in a pointed finitely
complete category $\C$ give rise to generic split extensions in $\C^{\two}$. To that end
suppose that $f:X\to Z$ is a morphism in $\C$. By assumption the normalizer
\[
\xymatrix{
X \ar@/_2ex/[dd]_{\langle k_X,k_Zf\rangle} \ar[d]^{n} \ar[r]^{\kappa} & R \ar[d]^{\langle r_1,r_2\rangle}\\
N\ar[r]^-{\langle 0,1\rangle}\ar[d]^{m} & N\times N\\
H(X)\times H(Z)
}
\]
exists. Consider the following pullback in $\SE{\C}$ 
 \[
\xymatrix@!@C=-20ex@R=-26ex{
 X
  \ar[dd]_{\langle 1,f\rangle}
  \ar[rr]^-{k_f}
  \ar@{=}[rd]&&
{H(A,Z,f)}
 \ar[dd]^(0.30){{j}}|\hole
\ar@<2.00pt>[rr]^{{p_f}}
\ar[rd]^{{v}}&&
{\spexc{X,Z,f}}
 \ar[dd]^(0.3){q}|(0.48)\hole|(0.51)\hole
 \ar@<2.00pt>[ll]^(0.4){{i_f}}
 \ar[rd]^{u}&\\
&X\ar[rr]^(0.30){\kappa}\ar[dd]^(0.25){\langle k_X,k_Zf\rangle}&&
R\ar[dd]^(0.3){\langle mr_1,mr_2\rangle}\ar@<2.00pt>[rr]^(0.30){r_1}&&
N\ar[dd]^{m}\ar@<2.00pt>[ll]^(0.70){s}\\
X\times Z\ar[rr]^(0.30){k_X\times k_Z}|\hole\ar[rd]_{k_X\times k_Z}&&
H(X)\times H(Z)\ar@<2.00pt>[rr]^(0.35){p_X\times p_Z}|\hole\ar[rd]_(0.4){\langle i_Xp_X\times i_Zp_Z,1\rangle\,\,\,}&&
\spexc{X}\times\spexc{Z}\ar@<2.00pt>[ll]|\hole^(0.65){i_X\times i_Z}\ar[rd]_{i_X\times i_Z}&\\
&H(X)\times H(Z)
\ar[rr]_-{\langle 0,1\rangle}&&
(H(X)\times H(Z)\times (H(X)\times H(Z))
\ar@<2.00pt>[rr]^-{\pi_1}&&
H(X)\times H(Z)
\ar@<2.00pt>[ll]^-{\langle 1, 1 \rangle}
}
 \]
The proof of Proposition 2.4 together with Proposition 2.6 of \cite{BOURN_GRAY:2015} tell us that the morphisms forming the front
and back faces of the previous diagram are $K$-precartesian. Lemma \ref{product_lemma} now tells us that
the morphism 
\[
\xymatrix{
 X
  \ar[d]_{f}
  \ar[r]^-{k_f}
  &
{H(A,Z,f)}
 \ar[d]^{\pi_2j}
\ar@<2.00pt>[r]^{p_f}
&
\spexc{X,Z,f}
 \ar[d]^{\pi_2 q}
 \ar@<2.00pt>[l]^{i_f}
 \\
Z\ar[r]^(0.30){k_Z}&
H(Z)\ar@<2.00pt>[r]^(0.35){p_Z}&
\spexc{Z}\ar@<2.00pt>[l]^{i_Z}
}
\]
is $K$-cartesian. Finally Proposition 2.6 of \cite{GRAY:2017} tells us that the previous
diagram is the generic split extension with kernel $(X,Z,f)$ in $\C^{\two}$.
It seems worthwhile also mentioning that $j$ and $q$ are necessarily monomorphisms
via Remark \ref{precarteian_above_mono}.

Let us write $\TFSE{(X,Z,f)}{\C}$ for the category $\underline K^{\two^{-1}}(X,Z,f)$ which has
objects of the forming
\begin{equation}
\label{mor_split_ext_common_cod}
\vcenter{
\xymatrix{
	X \ar[r]^{\kappa} \ar[d]_{f}& A \ar@<0.5ex>[r]^{\alpha}\ar[d]^{g} & B\ar@<0.5ex>[l]^{\beta}\ar@{=}[d]\\
Z \ar[r]^{\sigma} & C \ar@<0.5ex>[r]^{\gamma} & B.\ar@<0.5ex>[l]^{\delta}
}
}
\end{equation}
Via Proposition \ref{span:right_regular_reform} we obtain:
\begin{proposition}
Let $\C$ be a pointed category.
The category $\C^{\two}$ has generic split extensions if and only if
for each morphism $f:X\to Z$ the category $\TFSE{(X,Z,f)}{\C}$ 	
has a terminal object.\qed
\end{proposition}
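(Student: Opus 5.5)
We apply Proposition \ref{span:right_regular_reform} to the span $\bS$ of \eqref{split ext span}, with $\A=\C$, $\S=\SE{\C}$, $P$ the codomain-of-splitting functor and $Q=K$ the kernel functor. Under this identification $\S^\two$ is the category of morphisms of split extensions. It is equivalent to $\SE{\C^\two}$: a morphism of split extensions is exactly a split extension in $\C^\two$, since kernels and the relevant commutativities in $\C^\two$ are computed componentwise. Then $Q^\two=K^\two$ is the kernel functor $\SE{\C^\two}\to\C^\two$. Hence generic split extensions in $\C^\two$ are precisely terminal objects in the fibers $(K^\two)^{-1}(X,Z,f)$. Similarly, $\underline Q^{\two}$ restricted to the fiber over $(X,Z,f)$ is by definition $\TFSE{(X,Z,f)}{\C}$, whose objects are the diagrams \eqref{mor_split_ext_common_cod} with identity on $B$. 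So the statement reduces to the second sentence of Proposition \ref{span:right_regular_reform}, namely that $\underline Q^\two$ has terminal objects in its fibers if and only if $Q^\two$ does, once right regularity of $\bS$ is checked.

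The main step is verifying that $\bS$ is right regular. Take a split extension $(X,A,\alpha,\beta,\kappa)$ over $B$ and a morphism $w:B'\to B$. Pull back $\alpha$ along $w$ to get $A'=B'\times_B A$, with the induced section $\langle 1,\beta w\rangle$. The kernel of the pulled-back map is again $X$, via $\langle 0,\kappa\rangle$. The comparison morphism of split extensions has $u=1_X$ in the kernel component, so $Q$ sends it to an identity. $P$-cartesianness then follows directly from the universal property of the pullback. Given a morphism of split extensions into the original one whose $B$-component factors through $w$, the $A$-component factors uniquely through $A'$. The compatibilities with the sections and kernels follow because the projections into $A'$ are jointly monic.

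This construction uses pullbacks, so it needs some finite limits in $\C$, whereas the statement only assumes $\C$ is pointed. This is the point I expect to be the main obstacle. I would first try to avoid general pullbacks. The pullback of a split epimorphism along an arbitrary map need not exist in a bare pointed category, so instead I would check whether the proof of Proposition \ref{span:right_regular_reform} really uses the cartesian liftings only in restricted situations. If not, I would read the statement as carrying the paper's standing assumption from Theorem \ref{main1} that $\C$ is finitely complete, and argue under that hypothesis.

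With right regularity in hand, the conclusion is immediate: $\C^\two$ has generic split extensions if and only if every fiber of $Q^\two$ has a terminal object, which holds if and only if every fiber of $\underline Q^\two$ does. That last condition says exactly that each $\TFSE{(X,Z,f)}{\C}$ has a terminal object.
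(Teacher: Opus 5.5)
Your proposal is correct and follows the paper's route: the paper's proof is just the application of Proposition~\ref{span:right_regular_reform} to the split-extension span $(P,K)$, with $\SE{\C}^{\two}\cong\SE{\C^{\two}}$ and the fibers of $\underline K^{\two}$ being the categories $\TFSE{(X,Z,f)}{\C}$. Your explicit check of right regularity via pullbacks of split epimorphisms is something the paper leaves implicit, and your caveat is justified: the paper's one-line proof relies on right regularity of this span just as yours does, and a bare pointed category need not supply it, so reading the statement with pullbacks of split epimorphisms available (e.g.\ finite completeness, as in the rest of the section) is the right call.
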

In the presence of protomodularity the existence of terminal objects
in $\TFSE{(X,Z,f)}{\C}$ is equivalent to representability of 
\[
\SplExt_{\C^{\two}}(\Delta(-),(X,Z,f)): \C^{\op} \to \Set.
\]
When the above functor is representable one easily sees that $\spexc{X,Z,f}$ is its
representing object.
We obtain:
\begin{theorem}
A pointed finitely complete protomodular category is action representable with normalizers
in the sense of \cite{BOURN_GRAY:2015}
if and only if for each $f:X\to Z$ the functor 
\[
\SplExt_{\C^{\two}}(\Delta(-),(X,Z,f)): \C^{\op} \to \Set.
\]
is representable \qed
\end{theorem}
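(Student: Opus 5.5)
The plan is to chain Theorem \ref{main1}, the preceding Proposition, and the observation preceding the statement linking terminal objects of $\TFSE{(X,Z,f)}{\C}$ to representability. In a pointed finitely complete protomodular category, ``action representable'' means that every $\SplExt(-,X)$ is representable, which (as recalled in the introduction) amounts to $\C$ having generic split extensions. Condition (a) of Theorem \ref{main1} is therefore exactly ``action representable with normalizers in the sense of \cite{BOURN_GRAY:2015}''. By Theorem \ref{main1} this is equivalent to $\C^{\two}$ having generic split extensions. The preceding Proposition, which is derived from Proposition \ref{span:right_regular_reform}, then makes it equivalent to each $\TFSE{(X,Z,f)}{\C}$ having a terminal object.

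The remaining step is to check that, under protomodularity, $\TFSE{(X,Z,f)}{\C}$ has a terminal object if and only if $\SplExt_{\C^{\two}}(\Delta(-),(X,Z,f))$ is representable. I would argue as in the one-object case.

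First, protomodularity makes $\TFSE{(X,Z,f)}{\C}$ a preorder. A morphism between objects of the form \eqref{mor_split_ext_common_cod} over $B\to B'$ is determined by its component on $B$. Indeed, the split short five lemma forces the middle components to be jointly determined, since kernel and section are jointly strongly epic in a protomodular category. Moreover, pulling back along $w:B\to B'$ realises every morphism with base $w$ as a cartesian map followed by a morphism with identity base. Fibrewise, morphisms over $1_B$ are isomorphisms by the split short five lemma.

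Consequently the fibre over $B$, taken up to isomorphism, is precisely the set $\SplExt_{\C^{\two}}(\Delta(B),(X,Z,f))$, with functoriality given by pullback of both rows. The whole category $\TFSE{(X,Z,f)}{\C}$ is then the category of elements of this functor. A terminal object in a category of elements is exactly a representation: a universal element $(\spexc{X,Z,f},\xi)$ such that every element over $B$ is the pullback of $\xi$ along a unique $B\to\spexc{X,Z,f}$.

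I expect the main obstacle to be the identification of $\TFSE{(X,Z,f)}{\C}$ with the category of elements. This requires checking that morphisms in $\TFSE{(X,Z,f)}{\C}$ over $w$ correspond exactly to pullback squares: given a morphism over $w$, the comparison to the pullback is a morphism over $1_B$, hence an isomorphism by the split short five lemma applied in each row. It also requires checking uniqueness of morphisms with a given base, for which I would use that $\langle\kappa,\beta\rangle$ and $\langle\sigma,\delta\rangle$ are jointly epic in the protomodular setting. With this in hand, the equivalence of terminal objects and representability is formal, and the chain of equivalences completes the proof.
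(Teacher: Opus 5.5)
Your proposal is correct and follows the paper's own chain: Theorem \ref{main1} (a)$\Leftrightarrow$(b), then the Proposition obtained from Proposition \ref{span:right_regular_reform}, then the protomodular identification of terminal objects in $\TFSE{(X,Z,f)}{\C}$ with representations of $\SplExt_{\C^{\two}}(\Delta(-),(X,Z,f))$. The paper only asserts this last step, and you justify it correctly via the category of elements, with one wording slip: $\TFSE{(X,Z,f)}{\C}$ is not itself a preorder (for $f=0:0\to 0$ it is equivalent to $\C$). What you actually use, and what is true, is that its projection to $\C$ is faithful with fibres that are equivalence relations, and that a morphism over $w$ factors as a morphism over the identity followed by the cartesian one, not the reverse.
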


Via the construction immediately following Theorem \ref{main1} we obtain:
\begin{proposition}\label{2d is normalizer pullback} Let $\C$ be a pointed protomodular category admitting
normalizers and let $f:X\to Z$ be a morphism in $\C$. If $X$ and $Z$
  admit generic split extensions, then the object $\spexc{X,Z,f}$ exists and forms
  part of a pullback $(\spexc{X,Z,f},u,q)$ in the diagram
\[
  \xymatrix{
    X\ar[d]_{\langle 1,f \rangle} \ar[r]^{n} & N \ar[d]^{m} & \spexc{X,Z,f} \ar[l]_{u}\ar[d]^{q} \\
    X\times Z \ar[r]_-{k_X\times k_Z} & H(X) \times H(Z) &  \spexc{X}\times \spexc{Z} \ar[l]^-{i_X\times i_Z}
  }
\]
  in which $(N,n,m)$ is the normalizer of $\langle k_X,k_Zf \rangle$.\qed
\end{proposition}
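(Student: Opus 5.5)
We read the statement off the explicit construction given after Theorem \ref{main1}, adapting it to the protomodular setting. Fix $f:X\to Z$ and take the generic split extensions of $X$ and $Z$. Their product
\[
X\times Z \to H(X)\times H(Z) \rightleftarrows \spexc{X}\times\spexc{Z}
\]
is a split extension, and the lower rear face of the cube is a split extension whose $K$-image is $k_X\times k_Z$. Since $\C$ admits normalizers, the normalizer $(N,n,m)$ of $\langle k_X,k_Zf\rangle$ exists. By protomodularity the relation $R$ is unique, with projections $r_1,r_2$ and splitting $s$, so it yields a split extension $X\to R\rightleftarrows N$.

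\emph{Step 1: the pullback and the remaining generic split extensions.} The cited results of \cite{BOURN_GRAY:2015} (the proof of Proposition 2.4 and Proposition 2.6) show that the map of split extensions from $(X,R,N)$ to the split extension over $H(X)\times H(Z)$ built from $\langle 0,1\rangle$ and $\pi_1$ is $K$-precartesian above $\langle k_X,k_Zf\rangle$. Since $\SE{\C}$ has pullbacks computed componentwise, we pull this morphism back along the morphism of split extensions $X\times Z\to H(X)\times H(Z)$ given by $\langle i_Xp_X\times i_Zp_Z,1\rangle$ on the middle objects. This produces the back face of the cube, whose base component is $\spexc{X,Z,f}=(i_X\times i_Z)^{-1}(m)$, formed by the pullback of $m$ along $i_X\times i_Z$. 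Pullbacks of $K$-precartesian morphisms remain $K$-precartesian, so the back face is $K$-precartesian above $\langle 1,f\rangle$. Lemma \ref{product_lemma} then shows that the composite to the generic split extension of $Z$ is $K$-cartesian above $f$. Finally, Proposition 2.6 of \cite{GRAY:2017} identifies the resulting split extension as the generic split extension in $\C^{\two}$ with kernel $(X,Z,f)$. This gives existence, and by definition its base object is $\spexc{X,Z,f}$.

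\emph{Step 2: the pullback square.} It remains to check that $(\spexc{X,Z,f},u,q)$ is the pullback of $m$ along $i_X\times i_Z$. The base components of a pullback in $\SE{\C}$ are a pullback in $\C$, since $P$ preserves the limits constructed. The right face of the cube is therefore a pullback of $m$ along $i_X\times i_Z$, and this is exactly the right square displayed in the statement. The left square simply records that the normalizer factors $\langle k_X,k_Zf\rangle=mn$, that is, that it factors $(k_X\times k_Z)\langle 1,f\rangle$.

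\emph{Main obstacle.} The delicate point is ensuring that the object called $\spexc{X,Z,f}$ in the statement, defined as the representing object in the previous theorem, coincides up to isomorphism with the base of the generic split extension built above. Over a general pointed finitely complete $\C$ this identification needs care. Here protomodularity makes the relation $R$ in the normalizer unique, and makes terminal objects of $\TFSE{(X,Z,f)}{\C}$ correspond to representing objects. Uniqueness of terminal objects up to isomorphism then gives the identification.
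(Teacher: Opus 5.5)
Your proposal follows the paper's route: the paper's proof simply reads the statement off the cube constructed after Theorem \ref{main1}. There the right face is a pullback in $\C$ because pullbacks in $\SE{\C}$ are computed componentwise, and the left square commutes since $mn=\langle k_X,k_Zf\rangle=(k_X\times k_Z)\langle 1,f\rangle$. One small correction: $K$-precartesian morphisms are not pullback-stable in general, but the back face is $K$-precartesian here because the pullback projection onto $X\to R\rightleftarrows N$ has kernel component $1_X$ (the pullback of $\langle k_X,k_Zf\rangle$ along the monomorphism $k_X\times k_Z$ is $X$ itself), and in that situation precartesianness transfers by a direct check.
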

For $B$ in $\C$ let us write $K_B$, for the composite of the 
inclusion $P^{-1}(B)\to \SplExt(\C)$ and $K$.
Since protomodularity is equivalent to each functor $K_B$ reflecting isomorphisms,
and since each $K_B$ functor preserves limits it follows that each $K_B$ functor is faithful.
We obtain:
\begin{lemma}
Let $\C$ be a pointed finitely complete protomodular category.
Given a morphism $f:X\to Z$ and split extensions as shown
at the top and bottom of the diagram
\[
\xymatrix{
	X \ar[r]^{\kappa} \ar[d]_{f}& A \ar@<0.5ex>[r]^{\alpha}\ar@{-->}[d]^{g} & B\ar@<0.5ex>[l]^{\beta}\ar@{=}[d]\\
Z \ar[r]^{\sigma} & C \ar@<0.5ex>[r]^{\gamma} & B,\ar@<0.5ex>[l]^{\delta}
}
\]
in $\C$,
there is at most one morphism $g:A\to C$ making the diagram into a morphism
of split extensions.\qed
\end{lemma}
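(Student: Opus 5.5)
Given two such morphisms $g,g':A\to C$, I will reduce their equality to faithfulness of a suitable $K_{B'}$, as the remark preceding the statement suggests. The difficulty is that $g$ and $g'$ are not morphisms in a fiber $P^{-1}(B')$ between fixed split extensions of $B'$ with the same kernel map. Instead, they are morphisms of split extensions over $1_B$ whose kernel component is $f$. Now $K_B$ is faithful: morphisms in $P^{-1}(B)$ with equal kernel components are equal. Morphisms of split extensions in $P^{-1}(B)$ from $(X,A,\alpha,\beta,\kappa)$ to $(Z,C,\gamma,\delta,\sigma)$ are exactly triples $(u,v,1_B)$, and in the diagram both $(f,g,1_B)$ and $(f,g',1_B)$ are such morphisms. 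So $g$ and $g'$ are two parallel morphisms in $P^{-1}(B)$ with $K_B$-image $f$ in both cases.

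Faithfulness of $K_B$ therefore gives $g=g'$ directly. The only thing to justify carefully is that $K_B$ is faithful, which the paper has just derived: $K_B$ reflects isomorphisms by protomodularity, and it preserves finite limits because limits in $P^{-1}(B)$ are computed via pullbacks and the kernel functor commutes with them. A limit-preserving, conservative functor between finitely complete categories is faithful. I will recall the argument: if $K_B(g)=K_B(g')$, take the equalizer $e$ of $g,g'$ in $P^{-1}(B)$. Its image $K_B(e)$ is the equalizer of equal maps, hence an isomorphism, so $e$ is an isomorphism and $g=g'$.

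The main point needing care is that the equalizer of two morphisms in $P^{-1}(B)$ exists and is preserved by $K_B$. I would construct it explicitly as $E\to A$, the equalizer in $\C$ of $g,g'$. The section $\beta$ factors through $E$ because $g\beta=\delta=g'\beta$, and $\alpha$ restricts to $E$. The kernel of the restricted projection is the pullback of $\kappa$ along $E\to A$, which is the equalizer of $f,f$, namely $X$ itself. Hence $K_B(e)=1_X$ is an isomorphism, and the split short five lemma applied to $(1_X,e,1_B)$ shows that $e$ is an isomorphism. This gives $g=g'$.
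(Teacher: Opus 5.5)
Your proposal is correct and follows the paper's route. As in the paper, $(f,g,1_B)$ and $(f,g',1_B)$ are parallel morphisms in $P^{-1}(B)$ with the same image $f$ under $K_B$, and $K_B$ is faithful because it reflects isomorphisms (by protomodularity) and preserves limits. Your explicit equalizer argument spells out the faithfulness step that the paper leaves implicit: $\kappa$ and $\beta$ both factor through the equalizer $e$ of $g,g'$, since $g\kappa=\sigma f=g'\kappa$ and $g\beta=\delta=g'\beta$, so the split short five lemma makes $e$ invertible.
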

The previous lemma easily implies that:
\begin{proposition}
Let $\C$ be a pointed protomodular category and let $f:X\to Z$
be a morphism in $\C$. The map 
\[\gamma_{B}:\SplExt_{\C^{\two}}(\Delta(B),(X,Z,f)) \to \SplExt_{\C}(B,X)\times \SplExt_{\C}(B,Z)\]
assigning to each isomorphism class
of morphism of split extensions of the form \eqref{mor_split_ext_common_cod}
to the pair isomorphism classes of its domain and codomain, respectively,
is the component of a natural monomorphism.
\end{proposition}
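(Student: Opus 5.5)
We have to show three things: $\gamma_B$ is well defined on isomorphism classes, the family $(\gamma_B)_B$ is natural in $B$, and each $\gamma_B$ is injective. Throughout, $\C$ is assumed finitely complete, since otherwise pullbacks and the preceding lemma are not available.

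\emph{Well-definedness.} Two diagrams of the form \eqref{mor_split_ext_common_cod} are isomorphic in $\SplExt_{\C^{\two}}(\Delta(B),(X,Z,f))$ when there is an isomorphism of split extensions in $\C^{\two}$ that is the identity on the kernel $(X,Z,f)$ and on $\Delta(B)=(B,B,1_B)$. Such an isomorphism has two components, $a:A\to A'$ and $c:C\to C'$. Each is an isomorphism of split extensions of $B$ in $\C$ that is the identity on the kernel ($X$, respectively $Z$) and on $B$. So the isomorphism classes of the domain and codomain rows depend only on the class of the diagram.

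\emph{Naturality.} For $h:B'\to B$, the functor $\SplExt_{\C^{\two}}(\Delta(-),(X,Z,f))$ acts by pulling back along $\Delta(h)=(h,h)$. Pullbacks in $\C^{\two}$ are computed componentwise. So pulling back a diagram \eqref{mor_split_ext_common_cod} along $\Delta(h)$ gives the diagram whose top row is the pullback of $(\kappa,\alpha,\beta)$ along $h$ and whose bottom row is the pullback of $(\sigma,\gamma,\delta)$ along $h$. The connecting map is the one induced by $g$. This is exactly the action of $\SplExt_\C(-,X)\times\SplExt_\C(-,Z)$ on the pair of rows, which gives the naturality square.

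\emph{Injectivity.} This is where the previous lemma is used, and it is the only non-formal step. Suppose two diagrams $D=(\text{top},g,\text{bottom})$ and $D'=(\text{top}',g',\text{bottom}')$ have isomorphic domains and codomains. Choose isomorphisms of split extensions $a:A\to A'$ and $c:C\to C'$ that are identities on $X$, on $Z$ and on $B$. I then want $(a,c)$ to be a morphism in $\C^{\two}$, that is, $g'a=cg$.

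Both $g'a$ and $cg$ are morphisms $A\to C'$. Each makes the diagram formed by the top row of $D$, the bottom row of $D'$, the map $f$ on kernels and $1_B$ into a morphism of split extensions. For $g'a$ this holds because it is a composite of the morphism of split extensions $a$ over $(1_X,1_B)$ with $g'$ over $(f,1_B)$. For $cg$ it holds because it is a composite of $g$ over $(f,1_B)$ with $c$ over $(1_Z,1_B)$. By the previous lemma there is at most one such morphism, so $g'a=cg$.

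Hence $(a,c)$ is an isomorphism $D\cong D'$ of split extensions in $\C^{\two}$ that is the identity on the kernel $(X,Z,f)$ and on $\Delta(B)$. So $D$ and $D'$ have the same class, $\gamma_B$ is injective, and the maps $\gamma_B$ form a natural monomorphism.

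The main point to verify carefully is in the naturality step: that pullbacks of split extensions in $\C^{\two}$ are computed levelwise, so that pullback commutes with taking the two rows.
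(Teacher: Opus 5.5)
Your proposal is correct and follows the paper's approach. The paper proves this only by remarking that it follows easily from the preceding lemma on uniqueness of the connecting morphism $g$. Your injectivity step is exactly that argument: you observe that $g'a$ and $cg$ are both fillers between the top row of $D$ and the bottom row of $D'$, and the well-definedness and naturality checks you add are the routine componentwise verifications the paper leaves implicit.
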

For a morphism $f:X\to Z$ such that
	$\spexc{X}$, $\spexc{Z}$ and
   $\spexc{X,Z,f}$ exist, the morphism $q : \spexc{X,Z,f}\to\spexc{X}\times \spexc{Z}$
   defined above makes the diagram
\[
\xymatrix{
	\SplExt_{\C^{\two}}(\Delta(-),(X,Z,f)) \ar[r]^-{\gamma}\ar[d] & \SplExt_{\C}(-,X)\times \SplExt_{\C}(-,Z)\ar[d]\\
	\hom(-,\spexc{X,Z,f}) \ar[r]^{\hom(-,q)} & \hom(-,\spexc{X}\times\spexc{Z}),
}
\]
in which the vertical unlabelled morphisms are the natural isomorphisms forming
part of the respective representations, commute.
As an immediate corollary of the previous proposition we obtain:
\begin{corollary}
  Let $\C$ be a pointed protomodular category and let $f:X\to Z$
  be a morphism in $\C$, such that
	$\spexc{X}$, $\spexc{Z}$ and
   $\spexc{X,Z,f}$ exist. Given a diagram with
  solid arrows
  \begin{equation}\label{split_ext_to fill}
    \vcenter{
    \xymatrix{
      X
      \ar[r]^{\kappa}
      \ar[d]_{f} 
      &
      A
      \ar@{-->}[d]^{g}
      \ar@<0.5ex>[r]^-{\alpha}
      &
      B
      \ar@<0.5ex>[l]^-{\beta}
      \ar@{=}[d]
      \\
      Z
      \ar[r]^{\sigma}
      &
      A
      \ar@<0.5ex>[r]^-{\gamma}
      &
      B
      \ar@<0.5ex>[l]^-{\delta}
    }
  }
  \end{equation}
  there exists a morphism $g$ making the diagram into a
  morphism of split extensions if and only if the 
  morphism $\langle \theta,\varphi\rangle :B\to \spexc{X}\times \spexc{Z}$
  induced by the morphisms $\theta : B\to \spexc{X}$ and
  $\varphi: B\to \spexc{Z}$ which determine the split
  extensions at the top and bottom of the above diagram, respectively,
  factors through $q : \spexc{X,Z,f} \to \spexc{X}\times \spexc{Z}$.
\end{corollary}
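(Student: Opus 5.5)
The plan is to read the corollary off the commutative square displayed just before it, together with the fact (the preceding proposition) that $\gamma$ is a natural monomorphism. Under the representations, the pair of classes $([\text{top}],[\text{bottom}])\in \SplExt_{\C}(B,X)\times \SplExt_{\C}(B,Z)$ corresponds to the morphism $\langle\theta,\varphi\rangle\in\hom(B,\spexc{X}\times\spexc{Z})$. The argument therefore reduces to a diagram chase in $\Set$ at the component $B$, after justifying that the square commutes.

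First I will check the commutativity of the square, since the paper asserts it with little argument. Take a class of morphisms of split extensions of the form \eqref{mor_split_ext_common_cod}. Its image in $\hom(B,\spexc{X,Z,f})$ is the unique map $c$ along which the generic split extension in $\C^{\two}$ with kernel $(X,Z,f)$ pulls back to it. Composing with $q$ yields the pair of classifying maps of the domain and codomain rows. This holds because the generic split extension $(k_f,p_f,i_f)$ followed by $(j,q)$ and the projections gives morphisms of split extensions into the generic ones with kernels $X$ and $Z$; the first projection is $v$ and the back-face maps, and the second is $\pi_2 j$, $\pi_2 q$ as in the construction after Theorem~\ref{main1}. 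Uniqueness of classifying maps then gives $q c=\langle\theta,\varphi\rangle$.

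For the ``only if'' direction, suppose $g$ exists. Then the diagram \eqref{split_ext_to fill} is an element $x$ of $\SplExt_{\C^{\two}}(\Delta(B),(X,Z,f))$, and by definition $\gamma_B(x)$ is the pair of classes of its rows. Going down and then along $\hom(B,q)$, commutativity gives $q c_x=\langle\theta,\varphi\rangle$, so $\langle\theta,\varphi\rangle$ factors through $q$.

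For the ``if'' direction, suppose $\langle\theta,\varphi\rangle=q c$. The element of $\SplExt_{\C^{\two}}(\Delta(B),(X,Z,f))$ corresponding to $c$ is a morphism of split extensions whose rows are isomorphic, as split extensions with kernels $X$ and $Z$ over $B$, to the given top and bottom rows. Call the isomorphisms $a:A\to A'$ and $b:C'\to C$, where they restrict to identities on the kernels and on $B$. Transporting along them, I set $g=b\,g'\,a$. Then $g$ commutes with the kernels, since $g\kappa=bg'\kappa'=b\sigma' f=\sigma f$, and the remaining identities follow in the same way. This yields the required morphism.

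The only real obstacle is the commutativity check, that is, identifying $q$ composed with the classifying map as the pair of classifying maps. After that, the rest is just transporting the morphism along the isomorphisms of rows, which is where isomorphism classes enter.
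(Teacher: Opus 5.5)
Your proposal is correct and is the argument the paper leaves implicit: the corollary is read off from the commutative square comparing $\gamma$ with $\hom(-,q)$, which you justify via uniqueness of classifying maps, together with the transport of $g'$ along the row isomorphisms $g=b\,g'\,a$ that the paper does not spell out. One small slip: the morphism from the generic split extension in $\C^{\two}$ into the generic one with kernel $X$ is $(1_X,\pi_1 j,\pi_1 q)$, obtained from the back face by the first projection; it does not involve $v$, which lands in $R$.
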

\providecommand{\bysame}{\leavevmode\hbox to3em{\hrulefill}\thinspace}
\providecommand{\MR}{\relax\ifhmode\unskip\space\fi MR }
\providecommand{\MRhref}[2]{%
  \href{http://www.ams.org/mathscinet-getitem?mr=#1}{#2}
}
\providecommand{\href}[2]{#2}

\end{document}